\newtheorem{te}{Theorem}[section]
\newtheorem{co}{Corollary}[section]
\newtheorem{lm}{Lemma}[section]
\newtheorem{de}{Definition}[section]
\begin{document}

\title[New Solutions to the $G_2$ Hull-Strominger System]{New Solutions to the $G_2$ Hull-Strominger System via torus fibrations over $K3$ orbifolds}

\author{Anna Fino, Gueo Grantcharov, Jose Medel}
\subjclass[2000]{Primary 32J81; Secondary  53C07.}
\keywords{$G_2$-structure, $G_2$ Hull-Strominger system, K3 orbifold}
\address{Dipartimento di Matematica \lq\lq Giuseppe Peano\rq\rq \\ Universit\`a di Torino\\
Via Carlo Alberto 10\\
10123 Torino\\ Italy\\
and  Department of Mathematics and Statistics Florida International University\\
  Miami Florida, 33199, USA}
 \email{annamaria.fino@unito.it, afino@fiu.edu}
 \address{ Department of Mathematics and Statistics Florida International University\\
  Miami Florida, 33199, USA}
\email{grantchg@fiu.edu, jmede022@fiu.edu}

\maketitle

\begin{abstract} Using torus fibrations over K3 orbisurfaces, we construct new smooth solutions to the 
$G_2$ Hull–Strominger system. These manifolds arise as total spaces of principal $T^3$
 (orbi)bundles over singular K3 surfaces. Our construction is based on the choice of three divisors on a singular K3 surface that are primitive with respect to a particular K\"ahler metric. The stable bundle is obtained via an adaptation of the Serre construction  to the singular setting.

\end{abstract}
\section{Introduction}

Riemannian manifolds admitting  metric connections with totally skew-symmetric torsion and special holonomy have been studied in both mathematics and theoretical physics. The Killing spinor equations in heterotic supergravity require the existence of a spinor which is parallel with respect to a connection with torsion $H$ corresponding to a NS-flux.  On  manifolds of dimension 6, 7, or 8, this leads to restriction of the holonomy of such connection to $SU(3)$, $G_2$, $Sp(2)$, or $Spin(7)$ respectively. When the torsion does not vanish, it has to satisfy the anomaly cancellation condition, 
also  known as the Bianchi identity.  To  yield a supersymmetric vacuum of the heterotic theory,  a solution of the Killing spinor equations satisfying Bianchi identity,  must also  satisfy the instanton condition (\cite{Ivanov10}).

The resulting system of equations is  first considered on 6-manifolds and is called the Hull-Strominger system, introduced in \cite{Hull, Strominger}, which  describes the geometry of compactification of heterotic superstrings with torsion to the $4$-dimensional Minkowski spacetime. For a manifold of dimension seven, this system is also known as the $G_2$ Hull-Strominger system (or heterotic $G_2$ system) and has been investigated extensively in \cite{CGT,  DLS, DLS18,  FIUV11, FIUV15, GMW,  GL, II, GS25, GMPS, KPS, MSS, OLMS}. More recently, it has been  generalized in \cite{ADGL}, motivated by  theoretical physics and generalized geometry.

In this paper we focus on the 7-dimensional case. The 7-dimensional compact manifolds relevant to heterotic compactifications must admit  an integrable $G_2$-structure  \cite{GKMW, FI03},  also called  $G_2$-structure
with torsion (shortly $G_2T$-structure),  which guarantees the existence of a unique connection with totally skew symmetric torsion \cite{FI02}.   We recall that  a $G_2$-structure on a seven-dimensional manifold  $M$ is  defined by a globally defined $3$-form $\varphi$  that satisfies a certain non-degeneracy condition. This 3-form induces a Riemannian metric $g_{\varphi}$  and a volume form ${\text{Vol}}_{\varphi}.$ The study of $G_2$-structures was initiated by Fern\'andez and Gray \cite{Fernandez-Gray}, who classified them based on the irreducible components of the covariant derivative of $\varphi$ with respect to the Levi-Civita connection.  A  $7$-manifold $M$ endowed with a $G_2$-structure $\varphi$   admits a  connection  $\nabla$  preserving the $G_2$-structure  and   with  totally skew symmetric torsion   if and only
if  \begin{equation}\label{integrableG2}  d \star  \varphi = \theta \wedge \star \varphi, \end{equation} where $\star$  is  the Hodge star operator  with respect to  the metric $g_{\varphi}$  induced by $\varphi$ and $\theta$ denotes the  $G_2$ Lee form.   A  $G_2$-structure  is called  integrable  (or $G_2T$)  if the condition \eqref{integrableG2}  is satisfied. The torsion $3$-form $H_{\varphi}$ can be written in terms of $\varphi$ as follows
\begin{equation}\label{eqn:tor}
H_{\varphi} = \frac{1}{6} {\star( d\varphi \wedge  \varphi)}  \varphi - \star d\varphi + \star(\theta \wedge \varphi), 
\end{equation}
where $\theta = -\tfrac13\star(\star d\varphi\wedge \varphi)$.  For recent results in  the case of closed torsion see for instance \cite{FMR, FF}.  A classification of  Riemannian $7$-manifolds carrying  a $G_2 T$-structure  with parallel  torsion  has been obtained  \cite{MS}, up to naturally reductive homogeneous spaces and nearly parallel  $G_2$-structures.

Given a $7$-dimensional compact spin manifold $M$ and a principal $G$-bundle $P \to M,$ 
the $G_2$ Hull-Strominger system for a $G_2$-structure $\varphi$ on $M,$ 
a smooth function $f \in \mathcal{C}^\infty(M)$, a connection $\nabla$ on $TM$ and a connection $A$ on $P$ can be written as follows
\begin{equation}
\begin{split} \label{StromingersystemG2}
d\varphi \wedge \varphi &= 0, \quad  d\star \varphi=\, -4 df \wedge \star \varphi,\\
F_A \wedge \star \varphi & = 0,\quad R_\nabla \wedge \star \varphi= 0,\\
d H_{\varphi} &= \frac{\alpha'}{4}\left(\mathrm{tr}(F_A\wedge F_A) - \mathrm{tr}({R_\nabla \wedge R_\nabla})\right), 
\end{split}
\end{equation}
where $F_A$ and $R_\nabla$ are the curvatures of $A$ and $\nabla$, respectively, $\alpha'$ is a positive constant  
(the square of the string length) and $H_{\varphi}$ is the torsion of the $G_2$-structure  $\varphi$ given by
$$
H_{\varphi} = \star(- d\varphi -4df \wedge \varphi ).
$$
Thus, any definite $3$-form $\varphi$ solving  the system \eqref{StromingersystemG2} is a $G_2T$-structure, it satisfies the additional constraint $d\varphi \wedge \varphi=0$, and 
its Lee form $\theta = -4df$  must be exact. The second line of equations in \eqref{StromingersystemG2} is the $G_2$-instanton condition, and has been
the subject of important recent results (see  for instance  \cite{DS,HN, Oliveira, SW}.

Moreover, the torsion $3$-form $H_{\varphi}$  is constrained by the last equation 
of the system, which is related to the anomaly cancellation condition in string theory. 

As shown in \cite{GMPW} (see also \cite[Prop.~3.1]{CGT}),  when $M$ is compact and $dH_{\varphi} =0$, any solution to the system
\[
d\varphi \wedge \varphi=0, \qquad
d\star \varphi= -4 df\wedge \star \varphi,  \qquad 
d H_{\varphi} = 0, \quad 
\]
must have constant $f$ and $H_{\varphi}=0$. Consequently,  in particular the $G_2$-structure must be parallel,  namely  $d \varphi =0$ and $d \star \varphi =0$.  In this case compact solutions to the system \eqref{StromingersystemG2}   are constructed considering   the bundle of orthogonal frames of  the  $G_2$-holonomy metric,
and  $A =\nabla$ also equal to the Levi-Civita connection.
The first compact solutions with non-zero torsion $H_{\varphi}$ (and constant dilaton function $f$) to the  system \eqref{StromingersystemG2}   have been
constructed in \cite{FIUV11}. Non-compact solutions to \eqref{StromingersystemG2}  have been constructed in  \cite{FIUV15, GN}. In  \cite{CGT}, following  the  method initiated by Fu and Yau  \cite{FuYau}  for the construction of the first solutions to the Hull-Strominger system on non-K\"ahler complex $3$-folds, Clarke, Garcia-Fernandez and Tipler provide a new solutions to the  system \eqref{StromingersystemG2}.  The  solutions  to the Hull-Strominger system on non-K\"ahler complex $3$-folds  require that the connection  that appears in the anomaly cancellation term is the Chern connection on the $3$-fold.  With the diﬀerent hypothesis that  the connection  is an instanton,
Garcia-Fernandez  constructed  new solutions to the Hull-Strominger system on the same
3-folds  \cite{Fernandez18}. In  \cite{CGT}   this method  has been extended 
to the 7-dimensional case (see also \cite[Section 6]{FIUV11}. The moduli space of solutions to the system  \eqref{StromingersystemG2} has been widely studied \cite{DLMMS, DLS16, DLS18, DLS18bis, FQS, CGT}. The study is broadly based on the equivalence of the Hull-Strominger system to exactness of a sequence of differential operators, and the infinitesimal moduli correspond to the first cohomology of this sequence. 

In the present paper we construct examples of solutions of 7-dimensional Hull-Strominger system with different topology. They are principal $T^3$-(orbi)bundles over singular K3 surfaces. The examples generalize the ones in \cite{CGT, FIUV11, FIUV15}; see also \cite{YZ} for constructions of closed $G_2$-structures with $T^3$-symmetry.  Following the preliminary discussion of K3 orbifolds and their associated Seifert bundles in Section \ref{section2}, we establish the framework for constructing our spaces in Section \ref{section3}.  In particular it requires  three  divisors on a singular K3 surface which are primitive with respect to a particular K\"ahler metric, as well as a stable bundle with some restriction on its characteristic numbers. The construction of such objects on particular singular K3 surfaces from the lists of Iano-Fletcher \cite{IF} and M. Reid \cite{R} is presented  in Section  \ref{section4}. The construction of the stable bundle is based on the Serre construction of vector bundles adapted to the singular case at hand. 
Finally we combine our results to prove the main Theorem in Section 5, and in Section 6 we present  explicit examples, together with some topological information.

 \section{Preliminaries on K3 Orbifolds and $S^1$ Seifert bundles} \label{section2}

We recall that a  complex orbifold $X$  is  a normal complex space 
locally given by charts written as quotients of smooth coordinate charts. More precisely,  $X$  is a topological space together with a cover of coordinate charts where each element of the cover is homeomorphic to a quotient of an open in $\mathbb{C}^n$ containing the origin by a finite group $G_x$ and the transition functions are covered by holomorphic maps. In the paper  we will consider complex orbifolds with cyclic isotropy groups $G_x$, where $x$ is the image of the origin which is fixed under the action. The (complex) dimension of the orbifold  is the number $n$ in $\mathbb{C}^n/G_x$.  When  $n=2$ the orbifold  is  also called {\bf orbisurface}. Examples of  orbisurfaces which arise as a complete intersections in weighted projective spaces are given in \cite{IF}.

\begin{de}\label{definition2} An {\bf $A_n$ singularity} in a complex orbisurface  $X$ is an isolated  singular point $p\in X$ such that there is a neighborhood $W$ of $p$ with a chart $W\cong \mathbb{C}^2/\mathbb{Z}_{n+1}$, or equivalently, $W\cong V(x^2+y^2+z^{n+1}),$ where by $V(x^2+y^2+z^{n+1})$ we denote the ideal generated by $x^2+y^2+z^{n+1}$.
\end{de}

In the paper  we will consider  orbifold analogues of K3 surfaces. 

\begin{de}\label{definition1} A {\bf singular K3} $X$ is surface with only isolated singularities $\Sigma=\{p_1,...,p_k\}$, such that $H^1(X,\mathcal{O}_X)=0$ and $\omega_X\cong \mathcal{O}_X$, where $\omega_X:=i_*\omega_{X_{reg}}$, $i:X_{reg}\rightarrow X$ is the inclusion, and $\omega_{X_{reg}}$ is the canonical sheaf of $X_{reg},$ where $X_{reg}=X\setminus \Sigma$. Moreover, if we endow $X$ with an orbifold structure $\mathcal{X}=(X,\mathcal{U})$ we call $\mathcal{X}$ a {\bf K3 orbisurface}.
\end{de}

Note that the dualizing sheaf of $X$  is isomorphic to $i_*\omega_{X\setminus \Sigma}$, so the definition is consistent. Suitable K3 orbisurfaces with at worst finite isolated $A_n$ singularities can be found in \cite{R}  and \cite{IF}. Their orbifold structure is inherited from their respective weighted projective spaces.  As the smooth K3 surfaces admit a hyper-K\"ahler metric, the K3 orbifolds admit singular hyper-K\"ahler metrics of orbifold type. This follows from the solution of the Calabi-Yau problem for such orbifolds  (see for instance \cite{Faulk} and the references therein). We'll use this fact in the later sections.

In the  next  section  we  will consider  $T^3$-bundles  over an orbisurface  $X$. The  general theory of such spaces from the foliations view-point is given in \cite{HS}. In particular, such bundles are determined by three  rational divisors on the base orbifold and are  constructed via Seifert $S^1$-bundles.  For the definition of  Seifert $S^1$-bundle we refer to \cite[Definition 4.7.6]{BG}. Roughly speaking,  Seifert  $S^1$-bundles  are  spaces  with  a locally free $S^1$-action, for which the $S^1$-foliation has an orbifold leaf space.  A  multiple leave is an $S^1$-orbit on which the action is not globally free. 

We recall the following result, which is  Theorem 4.7.3 in \cite{BG} and has been  proven by  Koll\'ar in \cite{Kollar2004}.

\begin{te} \label{theor3.1}
Let $X$ be a normal reduced complex space with at worst quotient singularities and $\Delta=\sum_i (1-\frac{1}{m_i})D_i$
be a $\mathbb{Q}$ divisor (this is the data associated to an orbifold). Then there is a one-to-one correspondence between Seifert ${\mathbb C}^*$-bundles $f:Y\rightarrow (X,\Delta)$ and the  following data:

(i) For each $D_i$ an integer $0\leq b_i< m_i$ relatively prime to $m_i$, and

(ii) a linear equivalence class of Weil divisors $B\in Div(X)$.
\end{te}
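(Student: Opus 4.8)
The plan is to realize the correspondence through the graded sheaf of $\mathbb{C}^*$-invariants, which is the reflexive, orbifold analogue of recovering a line bundle from its associated $\mathbb{C}^*$-bundle. Given a Seifert $\mathbb{C}^*$-bundle $f\colon Y\to (X,\Delta)$, the $\mathbb{C}^*$-action endows $f_*\mathcal{O}_Y$ with a $\mathbb{Z}$-grading, so I would write $f_*\mathcal{O}_Y=\bigoplus_{n\in\mathbb{Z}}\mathcal{L}^{(n)}$, where each weight space $\mathcal{L}^{(n)}$ is a rank-one sheaf on $X$. Since $X$ is normal and the action is locally free away from the $D_i$, each $\mathcal{L}^{(n)}$ agrees in codimension one with a line bundle on $X_{\mathrm{reg}}$ and is therefore reflexive, hence of the form $\mathcal{O}_X(B_n)$ for a Weil divisor $B_n$ well-defined up to linear equivalence. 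The reflexive multiplication maps $\mathcal{L}^{(a)}\otimes\mathcal{L}^{(b)}\to\mathcal{L}^{(a+b)}$ encode the bundle completely, and $Y$ is recovered as the relative spectrum of this graded algebra.

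Next I would extract the local invariant $b_i$ by a codimension-one analysis near each $D_i$. At a generic point of $D_i$ the orbifold chart is $\mathbb{C}^2/\mathbb{Z}_{m_i}$, and the Seifert bundle is modeled on a quotient in which $\mathbb{Z}_{m_i}$ acts on the $\mathbb{C}^*$-fiber through a character $\zeta\mapsto\zeta^{b_i}$. The requirement that the leaf space carry exactly the multiplicity $m_i$ along $D_i$ forces $\gcd(b_i,m_i)=1$ and pins $b_i$ down in the range $0\le b_i<m_i$, which is the content of item (i). Combining this local rotation number with the grading yields the standard floor-function formula $B_n\sim nB-\sum_i\lfloor \tfrac{nb_i}{m_i}\rfloor D_i$, where $B:=B_1$ is the global Weil divisor class of item (ii). Thus both invariants are read off canonically from $Y$.

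For the converse, given the data $(b_i)$ and the class $B$, I would \emph{define} the reflexive sheaves $\mathcal{L}^{(n)}:=\mathcal{O}_X\!\left(nB-\sum_i\lfloor\tfrac{nb_i}{m_i}\rfloor D_i\right)$, equip $\bigoplus_n\mathcal{L}^{(n)}$ with the reflexive product, and set $Y:=\operatorname{Spec}_X\bigoplus_n\mathcal{L}^{(n)}$ with its tautological $\mathbb{C}^*$-action. One checks that over a neighborhood of each $D_i$ this reproduces the local quotient model with multiplicity $m_i$ and rotation number $b_i$; it is exactly here that the coprimality $\gcd(b_i,m_i)=1$ is used, guaranteeing that $f\colon Y\to(X,\Delta)$ is a genuine Seifert $\mathbb{C}^*$-bundle with the prescribed data. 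Finally I would verify the two assignments are mutually inverse: extracting invariants from the constructed $Y$ returns the input by the displayed formula and the local model, while reconstructing from the invariants of a given $Y$ recovers it, because a Seifert bundle over a normal base is determined by its behavior in codimension one together with its reflexive pushforward.

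The hard part will be the codimension-one analysis over the possibly singular base $X$: one must show that the graded pieces $\mathcal{L}^{(n)}$ are reflexive and that the relative spectrum of the reflexive graded algebra is again normal with the correct quotient singularities and leaf-space orbifold structure. This is where the normality of $X$, the quotient-singularity hypothesis on $\Delta$, and a Hartogs-type extension across the codimension-$\ge 2$ singular locus do the essential work, since away from that locus the statement reduces to the classical correspondence between $\mathbb{C}^*$-bundles and divisor classes on a smooth variety.
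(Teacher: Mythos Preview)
The paper does not prove this theorem at all: it is stated as a quoted result, attributed to Theorem~4.7.3 in Boyer--Galicki and to Koll\'ar's preprint, and no argument is given. Your sketch is therefore not comparable to anything in the paper; that said, the approach you outline---extracting the $\mathbb{Z}$-graded reflexive algebra $f_*\mathcal{O}_Y=\bigoplus_n\mathcal{L}^{(n)}$, identifying each graded piece with $\mathcal{O}_X(B_n)$ via the floor formula $B_n\sim nB-\sum_i\lfloor nb_i/m_i\rfloor D_i$, and reconstructing $Y$ as the relative $\operatorname{Spec}$---is exactly Koll\'ar's original proof, so your proposal is correct and faithful to the source the paper cites.
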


In the paper we will   consider the case when $Y$ is smooth and we will need  to consider the \lq\lq smoothness part\rq\rq \, of Theorem 4.7.7 in \cite{BG}:

\begin{te}

If $(X,\Delta)$ is a locally cyclic orbifold as in the Theorem above and $f: Y \rightarrow (X,\Delta)$  is an $S^1$-orbibundle whose local uniformizing groups inject into the group $S^1$ of the orbibundle, then $f: Y \rightarrow (X,\Delta)$ is a Seifert $S^1$-bundle and $Y$  is smooth.

\end{te}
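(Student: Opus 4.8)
The plan is to reduce the statement to a purely local computation in uniformizing charts, where the key mechanism is that injectivity of the local uniformizing groups into $S^1$ forces the relevant finite-group action to be free.

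First I would localize. Both the smoothness of $Y$ and the Seifert property are local conditions on the base, so it suffices to analyze $f$ over a uniformizing neighborhood of an arbitrary point $x \in X$. Since $(X,\Delta)$ is locally cyclic, we may take a chart of the form $\tilde U / G_x$ with $\tilde U \subset \mathbb{C}^n$ open and $G_x \cong \mathbb{Z}_m$ acting holomorphically, with $x$ the image of the fixed origin. Over such a chart an $S^1$-orbibundle is modeled on the associated quotient $(\tilde U \times S^1)/G_x$, where $G_x$ acts diagonally: on $\tilde U$ through the orbifold action and on the fiber $S^1$ through the prescribed injection $\iota\colon G_x \hookrightarrow S^1$.

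The central observation comes next. By hypothesis $\iota$ is injective, so every nontrivial $\gamma \in G_x$ acts on $S^1 \subset \mathbb{C}$ as multiplication by a primitive root of unity $\iota(\gamma) \neq 1$, and this rotation of $S^1$ has no fixed points. Consequently the diagonal action of $G_x$ on $\tilde U \times S^1$ is \emph{free}, even though the action on $\tilde U$ alone typically fixes the origin. A free action of a finite group on a smooth manifold produces a smooth quotient, so $(\tilde U \times S^1)/G_x$ is a smooth manifold; gluing these local models shows that $Y$ is smooth. I would then identify the residual structure: the $S^1$ acting on the fiber factor commutes with $G_x$, hence descends to an $S^1$-action on $Y$. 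This action is locally free, and the isotropy of the leaf lying over $x$ is exactly $\iota(G_x) \cong \mathbb{Z}_m$, so that orbit is a multiple leaf of multiplicity $m$, and the leaf space recovers $(X,\Delta)$. This is precisely the data of a Seifert $S^1$-bundle in the sense of \cite{BG}, establishing both conclusions.

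I expect the main obstacle to be the bookkeeping in this last step rather than the freeness argument, which is short. One must check that the local $S^1$-actions and their multiple leaves glue consistently with the divisor data $\Delta$ and the invariants $(b_i, B)$ attached to $f$ in Theorem \ref{theor3.1}, and that the local quotient smooth structures agree on overlaps so as to yield a genuine global manifold. The delicate point is matching the explicit local models $(\tilde U \times S^1)/G_x$ to the global Seifert description, ensuring that the structure-group $S^1$ of the orbibundle is the same circle through which the multiple leaves acquire their isotropy.
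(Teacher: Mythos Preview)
Your argument is correct and is essentially the standard one: localize to a uniformizing chart, model the orbibundle as $(\tilde U \times S^1)/G_x$, and use the injection $G_x \hookrightarrow S^1$ to conclude that the diagonal action is free, hence the quotient is a smooth manifold carrying a locally free $S^1$-action with the expected orbifold leaf space.

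However, there is nothing to compare against: the paper does not prove this statement. It is quoted verbatim as (the smoothness part of) Theorem~4.7.7 of Boyer--Galicki \cite{BG} and used as a black box. Your sketch recovers exactly the mechanism behind that result, so in effect you have reproduced the argument the paper defers to the reference. The gluing and compatibility concerns you flag at the end are genuine but routine, handled by the transition functions of the orbibundle, and do not represent a real obstacle.
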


For algebraic orbifolds this could be refined (see  Theorem 4.7.8 in \cite{BG} and \cite{K05}) and for an orbifold $(X,\Delta)$ with trivial $H^1_{orb}(X,{\mathbb Z})$ the Seifert $S^1$-bundle  $Y$  is uniquely determined by its first Chern class $c_1(Y/X)\in H^2(X,{\mathbb Q})$, which is defined as $[B] + \sum_i \frac{b_i}{m_i} [D_i]$.

\section{An ansatz on $T^3$-fibrations over $K3$ orbifolds} \label{section3}

Let  $(X,\omega_1, \omega_2, \omega_3)$  be  a compact K3  orbisurface   equipped  with  the hyperk\"ahler triple  $(\omega_1, \omega_2, \omega_3)$ of 2-forms   each of pointwise length $\sqrt{2}$. Assume that $X$ admits  three  anti-self-dual $(1,1)$-forms $\beta_1, \beta_2$ and $\beta_3$   such that $[\beta_1], [\beta_2], [\beta_3] \in H^2_{orb}(X,\mathbb{Z})$  (up to the factor $\frac{1}{2 \pi}$)   and the total space $M$ of the principal $T^3$  orbifold bundle
$\pi: M \rightarrow  X$ determined by them is smooth.

Let  $\theta = (\theta_1, \theta_2, \theta_3)$ be the $T^3$-connection form on
$M$, with values in $\mathbb R^3$, that satisfies $d \theta = (\pi^* \beta_1,  \pi^* \beta_2,  \pi^* \beta_3)$.  Let $u  \in {\mathcal C}^{\infty}(M,  \mathbb R)$  be a smooth
real-valued function on $M$ , and let $t > 0$ be a  constant.  As in \cite{CGT} we can consider the $3$-form
$$
\varphi := \varphi_{u,t} =  t^3 \theta_1 \wedge \theta_2 \wedge \theta_3  - t e^u  ( \theta_1  \wedge \omega_1 + \theta_2  \wedge  \omega_2 + \theta_3  \wedge \omega_3).
$$
The induced metric $g_{\varphi}$  and
volume form $ {\text{Vol}}_{\varphi}$  are  respectively given by
$$
g_{\varphi} = t^2 \sum_{i = 1}^3 \theta_i^2 + e^u \pi^* g_X, \quad  {\text{Vol}}_{\varphi} = t^3 e^{2u}  \theta_1 \wedge \theta_2 \wedge \theta_3  \wedge  \pi^* Vol_S.
$$
By Proposition 4.1 in \cite{CGT} it follows that $d \varphi \wedge \varphi =0$ and by 
Lemma 4.2 in \cite{CGT} we have that the Hodge dual $4$-form  $\star \varphi$  is given by
$$
\star \varphi =  \frac{e^{2u}}{2} \omega_1 \wedge \omega_1 - t^2 e^u ( \theta_2 \wedge \theta_3  \wedge \omega_1  +  \theta_3 \wedge \theta_1 \wedge \omega_2 +  \theta_1 \wedge \theta_2   \wedge \omega_3).
$$
By  Proposition  4.3  in \cite{CGT} we still  have
$$
d( \star \varphi) = - t^2 e^u  du  \wedge  (\theta_2 \wedge \theta_3 \wedge  \omega_1  + \theta_3 \wedge \theta_1 \wedge \omega_2 + \theta_1 \wedge \theta_2 \wedge \omega3)
$$  and so $d (\star \varphi) =   -4 df\wedge \star \varphi$ with $f = - \frac 14 u$.  By Lemma 4.4 in \cite{CGT} the torsion $3$-form  $H_{\varphi}$ is given by
$$
H_{\varphi} = t^2  (\beta_1  \wedge \sigma_1 + \beta_2  \sigma  \sigma_2 + \beta_3 \wedge \sigma_3) - \frac 12 e^u \iota_{{\text{grad}}(u)} \,  \omega_1^2,
$$
where ${{\text{grad}}(u)}$ is the gradient of $u$ on $X$
 and  by Lemma 4.5 in \cite{CGT}  we have
 $$
 dH_{\varphi}=  t^2 ( \beta_1^2  +  \beta_2^2 + \beta_3^2)  -\frac 12 e^u du \wedge  \iota_{{\text{grad}}(u))} \,  \omega_1^2) - \frac 12 e^u d  (\iota_{{\text{grad}}(u)}  \, \omega_1^2).
 $$
In the next section we will consider as instanton data a connection $A$ on a vector bundle instead of a principal bundle. We recall the notion of hyperholomorphic bundle over a hyper-K\"ahler manifold \cite{Verbitsky}. A Hermitian connection $A $ on the Hermitian
vector bundle $E$ is {\bf hyperholomorphic} if the curvature $F_A$  is of type $(1, 1)$ with respect to the
three complex structures  associated to the K\"ahler forms  $\omega_i$. In real dimension 4, this is equivalent to $F_A$ being anti-self-dual, and $A$ is called an instanton.  Note that the definition of hyperholomorphic bundle and Proposition 4.6 in \cite{CGT}  can be extended to K3 orbifolds, since the hyperholomorphic condition for a vector (or principal) bundle doesn't depend on the smoothness if the orbifold singularities are isolated and DuVal.

\section{Divisors and stable bundles on K3 orbisurfaces} \label{section4}

Our construction of solutions to the equations (\ref{StromingersystemG2}) is based on the following

\begin{lm}\label{lm4.1}
    Let $X$ be a normal surface with at worst isolated $A_n$-singularities, and assume that at least one of them has $n \geq 3$. Suppose that there is an ample divisor $\Bar{H}$ such that the Seifert $S^1$-bundle defined by $c_1=\bar{H}$ is smooth and simply connected. Then there is a chain of blow-ups $\pi:\Tilde{X}\rightarrow X$ such that there exist a rational ample divisor $\mathcal{E}\in \text{Pic}(\Tilde{X})\otimes \mathbb{Q}$ and three divisors $\mathcal{D}_0,\mathcal{D}_1,\mathcal{D}_2\in\text{Pic}(\Tilde{X})$ such that
    \begin{enumerate}
        \item For all $i$, $\mathcal{D}_i$ is primitive with respect to $\mathcal{E}$,  i.e. $\mathcal{D}_i\cdot \mathcal{E}=0$.\\
        \item The Seifert $S^1$-bundles $f_0:Y_1\rightarrow \Tilde{X}$ with $c_1(Y_1/\Tilde{X}) = \mathcal{D}_0$, $f_1:Y_2\rightarrow Y_1$  with $c_1(Y_2/Y_1) = f_1^*\mathcal{D}_1$, and $f_2:Y_3\rightarrow Y_2$ with $c_1(Y_3/Y_2) = (f_2\circ f_1)^*\mathcal{D}_2$ are smooth and simply connected. 
    \end{enumerate}
\end{lm}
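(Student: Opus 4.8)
The plan is to take $\pi:\tilde X\to X$ to be the minimal resolution of the $A_n$ singularities, which is itself realized as a chain of blow-ups, and then to produce $\mathcal E$ and the $\mathcal D_i$ by lattice theory inside $\mathrm{NS}(\tilde X)$. First I would record the two structural facts that drive everything. On one hand, resolving the $A_n$ singularity with $n\ge 3$ introduces a chain $E_1,\dots,E_n$ of $(-2)$-curves whose classes span a negative definite sublattice $\Lambda\subset \mathrm{NS}(\tilde X)$ of rank $n\ge 3$, all orthogonal to $\pi^*\bar H$; together with $\pi^*\bar H$, which has positive self-intersection and so is not in $\Lambda$, this forces $\rho(\tilde X)\ge 4$. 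On the other hand, since the Seifert bundle $Y\to X$ with $c_1=\bar H$ is simply connected, the homotopy sequence $\pi_1(S^1)\to \pi_1(Y)\to \pi_1^{orb}(X)\to 1$ forces $\pi_1^{orb}(X)=0$; as the exceptional locus of the resolution is a simply connected tree of rational curves, $\tilde X$ is simply connected as well. These provide the rank input and the base case for $\pi_1$, respectively.

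Next I would construct the ample class. Because the intersection form on $\Lambda$ is negative definite and nondegenerate, there is a unique $F\in \Lambda\otimes\mathbb Q$ with $F\cdot E_i=-1$ for every exceptional curve $E_i$, and I set $\mathcal E=\pi^*\bar H-\epsilon F$ for a small positive rational $\epsilon$. Then $\mathcal E\cdot E_i=\epsilon>0$, while for any curve $C$ not contracted by $\pi$ one has $\pi^*\bar H\cdot C=\bar H\cdot \pi_*C>0$, so $\mathcal E\cdot C>0$ for $\epsilon$ small; since moreover $\mathcal E^2=\bar H^2+\epsilon^2F^2>0$ for $\epsilon$ small (the cross term vanishes as $F\perp\pi^*\bar H$), the Nakai--Moishezon criterion shows $\mathcal E$ is ample. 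This is the rational ample class $\mathcal E\in\mathrm{Pic}(\tilde X)\otimes\mathbb Q$ of the statement.

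For the three divisors I would work in $\mathcal E^\perp\cap \mathrm{NS}(\tilde X)$. The orthogonal complement of a single class is saturated in the torsion-free lattice $H^2(\tilde X,\mathbb Z)$, and $\mathrm{NS}(\tilde X)$ is primitive in $H^2(\tilde X,\mathbb Z)$, so $\mathcal E^\perp\cap\mathrm{NS}$ is saturated in $H^2$; by the Hodge index theorem it is negative definite of rank $\rho(\tilde X)-1\ge 3$. I would then let $\mathcal D_0,\mathcal D_1,\mathcal D_2$ be a basis of any saturated rank-$3$ sublattice $N\subset \mathcal E^\perp\cap\mathrm{NS}$. By construction $\mathcal D_i\cdot\mathcal E=0$, which is (1), and $N$ is saturated in $H^2(\tilde X,\mathbb Z)$.

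Finally I would verify (2). Since $\tilde X$ is smooth and the $\mathcal D_i$ are integral, the orbifold datum is trivial and each Seifert bundle is an honest $S^1$-bundle over a smooth base, so $Y_1,Y_2,Y_3$ are smooth. For the fundamental group I would run the homotopy/Gysin sequence stage by stage: as $\tilde X$ is simply connected, $\pi_1(Y_1)\cong \mathbb Z/\mathrm{div}(\mathcal D_0)$, which is trivial because $\mathcal D_0$, being a basis vector of the saturated lattice $N$, is primitive in $H^2(\tilde X,\mathbb Z)$; inductively $H^2(Y_1,\mathbb Z)\cong H^2(\tilde X,\mathbb Z)/\mathbb Z\,\mathcal D_0$ and the Euler class of $Y_2\to Y_1$ is the image of $\mathcal D_1$, which is primitive in this quotient precisely because $N$ is saturated, so $\pi_1(Y_2)=0$; the same argument with $\mathcal D_2$ gives $\pi_1(Y_3)=0$. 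The main obstacle is the tension between (1) and (2): a genuinely ample $\mathcal E$ must be strictly positive on every exceptional curve, so the $\mathcal D_i$ cannot simply be taken to be the $E_i$, and this is exactly what the hypothesis $n\ge 3$ is for, since it guarantees $\rho(\tilde X)\ge 4$ and hence enough room in $\mathcal E^\perp$ to select a saturated rank-$3$ lattice after perturbing the nef class $\pi^*\bar H$ to the ample $\mathcal E$. The remaining care is the ampleness check, where the negative-definiteness of $\Lambda$ and the smallness of $\epsilon$ are what make Nakai--Moishezon applicable.
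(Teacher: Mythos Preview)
Your approach is essentially correct and is genuinely different from the paper's. Two small repairs are needed. First, as written you define $\Lambda$ using only the exceptional curves over the single $A_n$ point with $n\ge3$, but you take $\pi$ to be the full resolution; then $\mathcal E=\pi^*\bar H-\epsilon F$ has $\mathcal E\cdot E'=0$ on any exceptional curve $E'$ over a \emph{different} singular point, so it is not ample. You should let $\Lambda$ be the whole exceptional lattice and choose $F\in\Lambda\otimes\mathbb Q$ with $F\cdot E=-1$ for every exceptional curve; the same negative-definiteness argument applies. Second, the phrase ``$\epsilon$ small'' must be justified uniformly over the infinitely many non-exceptional curves; the clean way is to observe that $-F$ is $\pi$-ample (it is positive on every contracted curve), so $\pi^*\bar H-\epsilon F$ is ample for all small $\epsilon>0$ by the standard relative-ampleness criterion. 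With these fixes your saturation argument for simple connectedness of the tower $Y_1,Y_2,Y_3$ goes through exactly as you wrote.

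The paper, by contrast, proceeds by an explicit construction. It does \emph{not} fully resolve $X$: it composes an arbitrary chain of blow-ups $\pi_0$ with the full resolution $\pi_1$ of a single $A_3$ or $A_4$ point, leaving the other singularities untouched, and then writes down concrete classes $\mathcal E,\mathcal D_0,\mathcal D_1,\mathcal D_2$ in terms of the exceptional curves of $\pi_1$ together with $\pi^*\bar H$, with numerical parameters chosen (using Dirichlet's theorem on primes in arithmetic progressions) so that Nakai--Moishezon, primitivity, and the $\gcd$ conditions for simple connectedness all hold. Smoothness of the Seifert bundles over the remaining singular points is arranged by making each $\mathcal D_i$ restrict to $\pi^*\bar H$ there, and simple connectedness is checked by exhibiting explicit test classes with coprime intersection numbers. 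Your lattice-theoretic argument is cleaner and avoids all the arithmetic bookkeeping, but it pins $\tilde X$ down to the full minimal resolution. The paper's explicit route, though more laborious, works for a whole family of partial resolutions $\tilde X_k$, and this flexibility is exactly what is exploited in the subsequent Lemma~4.2 and in Section~6 to produce solutions on manifolds with a range of Betti numbers. So your proof establishes the lemma as stated, but would not plug directly into the paper's later applications.
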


\begin{proof}
   The chain of blow-ups $\pi: \widetilde{X} \rightarrow X$ can be decomposed in the following way: $\pi=\pi_1\circ \pi_0$, where $\pi_0$ is an arbitrary chain of blow-ups of singular points of $X$ and of the subsequent singular points; in particular, it may be used to reduce a singularity of type $A_n$ with $n>4$ to an $A_3$ or $A_4$ if needed, and $\pi_1$ is the full resolution of the resulting $A_3$ or $A_4$ singularity. Observe that for $\pi_0:\bar{X}\rightarrow X$ we can define an ample divisor for $\bar{X}$ in  the following way: let $C_i$ be the irreducible negative curves which appear in the map $\pi_0,$ so there is an ample divisor 
    $$\bar{\mathcal{E}}:=\bar{H}-\frac{1}{j}\left(\sum_i C_i\right).$$
    It is easy to see that for a $j\in \mathbb{N}$ big enough this divisor is ample by Nakai-Moishezon criteria of ampleness.

    First we check the case when $\pi_1:\Tilde{X}\rightarrow \bar{X}$ is the resolution of an $A_3$ singularity. Thus $\text{Pic}(\Tilde{X})\cong \text{Pic}(\bar{X})\oplus \langle \mathcal{L},\mathcal{L}',\mathcal{G}\rangle$,  where $\mathcal{G}=C_0$, $\mathcal{L}=2C_1,$ and $\mathcal{L}'=C_1+C_2$. $C_1$ and $C_2$ are two irreducible curves in the exceptional divisor of the first blow-up in the resolution of the $A_3$ singularity,  and $C_0$ is the exceptional divisor of the second the blow-up of the resolution. Moreover, $C_0$ is the exceptional curve of the blow-up of an $A_1$ singularity. Observe that $C_0^2=-2$, $C_1^2=C_2^2=-\frac{3}{2}$, $C_1\cdot C_2=\frac{1}{2}$, and $C_0\cdot C_1=C_0\cdot C_2=0$. Let $c=\bar{H}^2$, and $\bar{\mathcal{E}}$ be an ample divisor over $\bar{X}$ defined as above, so $\bar{\mathcal{E}}\cdot \pi_0^*\bar{H} = \bar{H}^2=c$.  We define 
    \begin{align*}
    \mathcal{E}&:=\frac{k(3a-b)(3b-a)}{c}\pi_1^*\bar{\mathcal{E}}-aC_1-bC_2-\frac{1}{2}C_0,\\
    \mathcal{D}_0&:=\pi^*\bar{H}-(k(3b-a)-1)(3a-b)C_0-2C_1,\\
    \mathcal{D}_1&:=\pi^*\bar{H}-2k(3b-a)C_1,\\
    \mathcal{D}_2&:=\pi^*\bar{H}-2k(3a-b)C_2,
    \end{align*}
    where $a=3m+c-1$, $b=c+m$,  with $k,m\in \mathbb{N}$  to be determined later. Observe that $3a>b$ and $3b>a$.\\
    \vspace{.05in}
    
    Now we check for ampleness of $\mathcal{E}$ by Nakai-Moishezon condition. We have to check that for any irreducible curve $D$ we have $\mathcal{E}\cdot D>0$ and $\mathcal{E}^2>0.$ Any irreducible curve in $\Tilde{X}$ is the proper preimage of a curve in $\bar{X}$ or a curve in the exceptional divisor. A divisor $D$ in $\bar{X}$ either passes through the singular point $A_3$ or not. If it does not, then the proper preimage is equal to the pullback i.e. $\Tilde{D}=\pi_1^*D$
    \begin{align*}
    \mathcal{E}\cdot \Tilde{D}&=\mathcal{E}\cdot\pi_1^*D=q\bar{H}\cdot D>0,
    \end{align*}
    where $q=\frac{k(3a-b)(3b-a)}{c}.$\\
    Now, if $D$ passes through the singular point $A_3$, then $\Tilde{D}=\pi_1^*D-C_1-C_2$
    \begin{align*}
    \mathcal{E}\cdot \Tilde{D}&=\mathcal{E}\cdot(\pi_1^*D-C_1-C_2),\\
    &=q\bar{H}\cdot D+aC_1^2+bC_2^2+(a+b)C_1\cdot C_2,\\
    &=q\bar{H}\cdot D-(a+b)>0;
    \end{align*}
    there is a finite number of rationally equivalent curves in $X$, so we can choose $k$ in $q$ big enough such that $q\bar{H}\cdot D-(a+b)>0$ for all $D$.\\
    Now we check the curves in the exceptional divisor.
    \begin{align*}
        \mathcal{E}\cdot C_0&=1>0,\\
        \mathcal{E}\cdot C_1&=-aC_1^2-bC_2\cdot C_1\\
        &=\frac{3a-b}{2}>0,\\
        \mathcal{E}\cdot C_2&=-aC_1\cdot C_2-bC_2^2\\
        &=\frac{3b-a}{2}>0.\\
    \end{align*}
    Finally we check if $\mathcal{E}^2>0.$ We compute
    \begin{align*}
        \mathcal{E}^2&=q^2c+a^2C_1^2+b^2C_2^2+(a+b)C_2\cdot C_1+\frac{1}{4}C_0^2\\
        &= q^2c-\frac{3}{2}a^2-\frac{3}{2}b^2+\frac{1}{2}(a+b-1)>0.
    \end{align*}
    Again we choose $k$ in $q$ big enough such that 
    $$q^2c-\frac{3}{2}a^2-\frac{3}{2}b^2+\frac{1}{2}(a+b-1)>0.$$
    Thus we have shown that $\mathcal{E}$ is ample.\\
    \vspace{.05in}
    Now we show that for all $i$, $\mathcal{D}_i$ is primitive with respect to  $\mathcal{E}$.\\
    \begin{align*}
        \mathcal{E}\cdot \mathcal{D}_0&=k(3a-b)(3b-a)+\frac{1}{2}(k(3b-a)-1)(3a-b)C_0^2+2aC_1^2+2bC_1\cdot C_2\\
        &=k(3a-b)(3b-a)+\frac{1}{2}(k(3b-a)-1)(3a-b)(-2)+2a\left(-\frac{3}{2}\right)+2b \left(\frac{1}{2}\right)\\
        &=k(3a-b)(3b-a)-(k(3b-a)-1)(3a-b)-(3a-b)\\
        &=0,\\
        \mathcal{E}\cdot \mathcal{D}_1&=k(3a-b)(3b-a)+2ka(3b-a)C_1^2+2kb(3b-a)C_1\cdot C_2\\
        &=k(3a-b)(3b-a)+2ka(3b-a)\left(-\frac{3}{2}\right)+2kb(3b-a) \left(\frac{1}{2}\right)\\
        &=0,\\
        \mathcal{E}\cdot \mathcal{D}_2&=k(3a-b)(3b-a)+2kb(3a-b)C_2^2+2ka(3a-b)C_1\cdot C_2\\
        &=k(3a-b)(3b-a)+2kb(3a-b)\left(-\frac{3}{2}\right)+2ka(3a-b) \left(\frac{1}{2}\right)\\
        &=0.
    \end{align*}
    \vspace{.05in}
    Now we check the smoothness of the Seifert $S^1$-bundles generated by the $D_i$. We have assumed that $\bar{H}$ generates a smooth Seifert bundle over $X$. Now, $\Tilde{X}$ might still have other $A_n$ singularities, but observe that $\mathcal{D}_i$ restricts to $\pi^*\bar{H}$ over those singular points, and by assumption they generate a smooth Seifert $S^1$-bundle over those singular points. The rest of the points are smooth, so $\mathcal{D}_i$ trivially generate the local class group since these groups are trivial.\\
    \vspace{.05in}
    
    The last part of the first case, when $\pi_1:\Tilde{X}\rightarrow \bar{X}$ is the resolution of an $A_3$ singularity, is to check if the Seifert $S^1$-bundles generated by $\mathcal{D}_i$ and their pullbacks are simply connected. By \cite{K05} we only have to check if there are elements $\alpha, \beta$ in $H_2(\Tilde{X},\mathbb{Z})$ such that $\text{gcd}(\mathcal{D}_i\cap \alpha, \mathcal{D}_i\cap \beta)=1$, or simply $\mathcal{D}_i\cap \alpha=1$; with $\alpha, \beta$ not necessarily the same for the different $\mathcal{D}_i$'s.\\
    We start with $\mathcal{D}_0$, consider $-C_2$
    \begin{align*}
        \mathcal{D}_0\cdot (-C_2)&=-2C_1\cdot (-C_2)=2C_1\cdot C_2=1;
    \end{align*}
    thus $\mathcal{D}_0=\pi^*\bar{H}-(k(3b-a)-1)(3a-b)C_0-2C_1$ defines a simply connected Seifert $S^1$-bundle. \\
    For $\mathcal{D}_1=\pi^*\bar{H}-2k(3b-a)C_1$ consider $\pi^*\bar{H}$, and $-C_2$
    \begin{align*}
        \mathcal{D}_1\cdot \pi^*\bar{H}&=c,\\
        \mathcal{D}_1\cdot (-C_2)&=2k(3b-a)C_1\cdot C_2=k(3b-a).\\
    \end{align*}
    Observe that $a=3m+c-1$ and $b=c+m$ for some $m\in \mathbb{N}$ we have 
    $$3b-a=3c+3m-3m-c+1=2c+1,$$
    so $3b-a$ is coprime to $c$ and we can make $k$ coprime to $c$ as well, thus $\text{gcd}(c,k(3b-a))=1.$
    Finally, for $\mathcal{D}_2=\pi^*\bar{H}-2k(3a-b)C_2$ consider $\pi^*\bar{H}$, and $-C_1$
    \begin{align*}
        \mathcal{D}_2\cdot \pi^*\bar{H}&=c,\\
        \mathcal{D}_2\cdot (-C_1)&=2k(3a-b)C_2\cdot C_1=k(3a-b).\\
    \end{align*}
    Observe that 
    $$3a-b=9m+3c-3-c-m=8m+2c-3,$$
    take 
    $$3a-b \mod c = 8m-3\mod c,$$
    by Dirichlet's theorem on arithmetic progressions there is an $m\in \mathbb{N}$ such that $8m-3$ is prime, making $3a-b$ coprime to $c,$ \\
    i.e. $\text{gcd}(c,k(3a-b))=1.$\\
    \vspace{.05in}

  Now we check the case when $\pi_1:\Tilde{X}\rightarrow \bar{X}$ is the resolution of an $A_4$ singularity. Thus $\text{Pic}(\Tilde{X})\cong \text{Pic}(\bar{X})\oplus \langle \mathcal{L}_0,\mathcal{L}_0',\mathcal{L}_1,\mathcal{L}_1'\rangle$, where $\mathcal{L}_0=C_0$, $\mathcal{L}_1'=C_1$ $\mathcal{L_2}=3C_2,$ and $\mathcal{L}_1'=C_2+C_3$. $C_3$ and $C_2$ are two irreducible curves in the exceptional divisor of the first blow-up in the resolution of the $A_4$ singularity, and $C_1$ and $C_0$ are two irreducible curves in the exceptional divisor of the second blow-up of the resolution,  i.e. of an $A_2$ singularity.  Observe that $C_0^2=C_1^2=-2$, $C_0\cdot C_1=1$, $C_2^2=C_3^2=-\frac{4}{3}$, $C_2\cdot C_3=\frac{1}{3}$, $C_0\cdot C_2=C_0\cdot C_3=C_1\cdot C_2=C_1\cdot C_3=0$. Let $c=\bar{H}^2$, and $\bar{\mathcal{E}}$ be an ample divisor over $\bar{X}$ defined as above, so $\bar{\mathcal{E}}\cdot \pi_0^*\bar{H} = \bar{H}^2=c$. We define 
    \begin{align*}
    \mathcal{E}&:=\frac{k(2x_0-x_1)(2x_1-x_0)(4x_2-x_3)}{c}\pi_1^*\bar{\mathcal{E}}-x_3C_3-x_2C_2-x_1C_1-x_0C_0,\\
    \mathcal{D}_0&:=\pi^*\bar{H}-k(2x_1-x_0)(4x_2-x_3)C_0,\\
    \mathcal{D}_1&:=\pi^*\bar{H}-k(2x_0-x_1)(4x_2-x_3)C_1,\\
    \mathcal{D}_2&:=\pi^*\bar{H}-3k(2x_0-x_1)(2x_1-x_0)C_2,
    \end{align*}
    where $x_3=4m+c-1$, $x_1=2m+c-1$, and $x_2=x_0=c+m$,  with $k,m\in \mathbb{N}$  to be determined later. Observe that $4x_3>x_2$, $4x_2>x_3$, $2x_1>x_0$, and $2x_0>x_1$.\\
    \vspace{.05in}\\
    Now same as before we check for ampleness of $\mathcal{E}$ by Nakai-Moishezon condition. Again, any irreducible curve of $\Tilde{X}$ is the proper preimage of a curve in $\bar{X}$. A divisor $D$ in $\bar{X}$ either passes through the singular point $A_4$ or not. If it does not, then the proper preimage is equal to the pullback i.e. $\Tilde{D}=\pi_1^*D$
    \begin{align*}
    \mathcal{E}\cdot \Tilde{D}&=\mathcal{E}\cdot\pi_1^*D=q\bar{H}\cdot D>0,
    \end{align*}
    where $q=\frac{k(2x_0-x_1)(2x_1-x_0)(4x_2-x_3)}{c}.$\\
    Now, if $D$ passes through the singular point $A_4$, then $\Tilde{D}=\pi_1^*D-C_3-C_2$
    \begin{align*}
    \mathcal{E}\cdot \Tilde{D}&=\mathcal{E}\cdot(\pi_1^*D-C_3-C_2),\\
    &=q\bar{H}\cdot D+x_3C_3^2+x_2C_2^2+(x_2+x_3)C_3\cdot C_2,\\
    &=q\bar{H}\cdot D-(x_2+x_3)>0;
    \end{align*}
    there is a finite number of rationally equivalent curves in $X$, so we can choose $k$ in $q$ big enough such that $q\bar{H}\cdot D-(x_2+x_3)>0$ for all $D$.\\
    Now we check the curves in the exceptional divisor and we get
    \begin{align*}
        \mathcal{E}\cdot C_0&=-x_1C_0\cdot C_1 - x_0C_0^2\\
        &=2x_0-x_1>0,\\
        \mathcal{E}\cdot C_1&=-x_1C_1^2-x_0C_0\cdot C_1\\
        &=2x_1-x_0>0,\\
        \mathcal{E}\cdot C_2&=-x_3C_3\cdot C_2-x_2C_2^2\\
        &=\frac{4x_2-x_3}{3}>0.\\
    \end{align*}
    Finally we check if $\mathcal{E}^2>0.$ Indeed,
    \begin{align*}
        \mathcal{E}^2&=q^2c+x_3^2C_3^2+x_2^2C_2^2+x_1^2C_1^2+x_0^2C_0^2+2x_3x_2C_3\cdot C_2+2x_1x_0C_1\cdot C_0\\
        &= q^2c-\frac{4}{3}(x_3^2+x_2^2)-2(x_1^2+x_0^2)+\frac{2}{3}x_3x_2+2x_1x_0>0,
    \end{align*}
    where we choose $k$ in $q$ big enough such that the last inequality is true.
    Thus we have shown that $\mathcal{E}$ is ample.
        Now we show that for all $i$, $\mathcal{D}_i$ is primitive with respect to  $\mathcal{E}$. To verify this, we compute the intersection numbers explicitly:
    \begin{align*}
        \mathcal{E}\cdot \mathcal{D}_0&=q\bar{H}^2+x_0k(2x_1-x_0)(4x_2-x_3)C_0^2+x_1k(2x_1-x_0)(4x_2-x_3)C_1\cdot C_0\\
        &=qc-2x_0k(2x_1-x_0)(4x_2-x_3)+x_1k(2x_1-x_0)(4x_2-x_3)\\
        &=k(2x_0-x_1)(2x_1-x_0)(4x_2-x_3)+k(2x_1-x_0)(4x_2-x_3)(x_1-2x_0)\\
        &=0,\\
        \mathcal{E}\cdot \mathcal{D}_1&=q\bar{H}^2+x_0k(2x_0-x_1)(4x_2-x_3)C_1^2+x_0k(2x_0-x_1)(4x_2-x_3)C_1\cdot C_0\\
        &=qc-2x_1k(2x_0-x_1)(4x_2-x_3)+x_0k(2x_0-x_1)(4x_2-x_3)\\
        &=k(2x_1-x_0)(2x_0-x_1)(4x_2-x_3)+k(2x_0-x_1)(4x_2-x_3)(x_0-2x_1)\\
        &=0,\\
        \mathcal{E}\cdot \mathcal{D}_2&=qc+x_33k(2x_0-x_1)(2x_1-x_0)C_3\cdot C_2+x_23k(2x_0-x_1)(2x_1-x_0)C_2^2\\
        &=k(2x_1-x_0)(2x_0-x_1)(4x_2-x_3)+k(x_3-4x_2)(2x_0-x_1)(2x_1-x_0)\\
        &=0.
    \end{align*}
    This shows that $\mathcal{D}_i$ is primitive with respect to $\mathcal{E}$ for all $i$.\\
    \vspace{.05in}\\
    The smoothness of the Seifert $S^1$-bundles follows by the same argument as in the $A_3$ case.\\
    Finally we check as in the $A_3$ case if the divisors $\mathcal{D}_i$ define simply connected Seifert $S^1$-bundles by finding two $\alpha,\beta\in H_2(\Tilde{X},\mathbb{Z})$ such that gcd$(\mathcal{D}_i\cap \alpha,\mathcal{D}_i\cap\beta)=1$; with $\alpha,\beta$ not necessarily the same for the different $\mathcal{D}_i$.  
    Observe that for all $i$ we have $\mathcal{D}_i\cdot \pi^*\bar{H}=\bar{H}^2=c$, thus we have to find for each $\mathcal{D}_i$ a divisor whose intersection with $\mathcal{D}_i$ is coprime to $c$.\\
    Recall that $x_3=4m+c-1$, $x_1=2m+c-1$, and $x_2=x_0=c+m$.
    Consider for $\mathcal{D}_0$ the divisor $-C_1.$ We have 
    $$\mathcal{D}_0\cdot (-C_1)=(2x_1-x_0)(4x_2-x_3)=(3m+c-2)(3c+1).$$
    Now, $(3m+c-2)(3c+1)\mod c=3m-2$, and by Dirichlet's theorem on arithmetic progressions there is an $m\in \mathbb{N}$ such that $3m-2$ is prime, making $\mathcal{D}_0\cdot (-C_1)$ coprime to $c.$
    Consider for $\mathcal{D}_1$ the divisor $-C_0.$ Then we have
    $$\mathcal{D}_1\cdot (-C_0)=(2x_0-x_1)(4x_2-x_3)=(c+1)(3c+1),$$
    where $(c+1)(3c+1)\mod c = 1$, so $\mathcal{D}_1\cdot (-C_0)$ is coprime to $c.$
    Finally, consider for $\mathcal{D}_2$ the divisor $-C_3.$ We have 
    $$
    \begin{array}{lcl} \mathcal{D}_2\cdot (-C_3)&=& 3(2x_0-x_1)(2x_1-x_0)=(c+1)(3m+c-2)\mod c\\[3pt]
    &=&3m-2\\[3pt]  
    &=& 1 \mod c,
    \end{array}$$ 
    thus $\mathcal{D}_2\cdot (-C_3)$ is coprime to $c$.

\end{proof}

For the solution of the $G_2$-Strominger system we also need a vector bundle coming from  a stable bundle on $\tilde{X}_k$. The following Lemma provides such bundles, and its proof appears in Lemma 4.1 in \cite{FGM}.

\begin{lm}\label{lm4.2}
    Let $\tilde{X}_k$ be the orbisurface obtained by blowing up singular points of $X_{36},X_{50}$, such that $0\leq k\leq 18$ with $b_2(\tilde{X}_k)=k+4$. Then for any $k>1$ there exists on $\tilde{X}_k$  a stable bundle $V$ of rank $2$ with $c_1(V)=0$ and $c_2(V)=c$ for any $c\geq 5$.
\end{lm}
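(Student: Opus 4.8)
The plan is to build $V$ by the Serre construction, adapted to the orbisurface $\tilde X_k$. A rank-two bundle obtained as a plain extension $0\to\mathcal O\to V\to I_Z\to 0$ has $c_1=0$ but carries a section, so it is destabilized by the subsheaf $\mathcal O$; to force stability I would instead start from a negative twist. Concretely, fix a line bundle $\mathcal M$ on $\tilde X_k$ with $\mathcal M\cdot\mathcal E<0$ (using the polarization $\mathcal E$ from Lemma \ref{lm4.1}, or any fixed ample class) together with a zero-dimensional local complete intersection subscheme $Z$ supported in the smooth locus of $\tilde X_k$, and look for $V$ as a locally free extension
\[
0\longrightarrow \mathcal M\longrightarrow V\longrightarrow \mathcal M^{-1}\otimes I_Z\longrightarrow 0 .
\]
A Chern-class computation gives $c_1(V)=0$ and $c_2(V)=\operatorname{length}(Z)-\mathcal M^2$, so I would choose $\operatorname{length}(Z)=c+\mathcal M^2\geq 0$ to hit the prescribed $c_2=c$. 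The hypothesis $k>1$ together with $b_2(\tilde X_k)=k+4$ is what guarantees $\operatorname{Pic}(\tilde X_k)$ is rich enough to contain a suitable negative $\mathcal M$, while $c\geq 5$ keeps $\operatorname{length}(Z)$ large enough for the later steps.

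Next I would settle existence of a \emph{locally free} $V$ via the Cayley--Bacharach condition. Twisting the sequence by $\mathcal M^{-1}$ puts it in standard Serre form $0\to\mathcal O\to V\otimes\mathcal M^{-1}\to \mathcal M^{-2}\otimes I_Z\to 0$, so by the Serre correspondence an extension class in $\operatorname{Ext}^1(\mathcal M^{-1}\otimes I_Z,\mathcal M)$ produces a locally free $V$ exactly when $Z$ satisfies Cayley--Bacharach with respect to $\omega_{\tilde X_k}\otimes\mathcal M^{-2}$. Since the resolutions of the $A_n$ singularities are crepant, the orbifold canonical class of $\tilde X_k$ stays trivial, so the relevant twist is essentially $\mathcal M^{-2}$, and for $Z$ in general position the Cayley--Bacharach property holds; the local-to-global spectral sequence then yields local freeness. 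Here I would record the orbifold adaptation explicitly: because the singularities are isolated and du Val, placing $Z$ in the smooth locus lets one run the smooth-surface Serre construction there and extend $V$ across the finitely many quotient points by reflexivity, where it restricts to $\mathcal M\oplus\mathcal M^{-1}$; checking that $\mathcal M$ is an orbi-line-bundle with admissible local isotropy action shows $V$ is a genuine orbibundle (a V-bundle).

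The heart of the argument is stability with respect to $\mathcal E$. I would argue by contradiction: suppose $N\hookrightarrow V$ is a sub-line-bundle with $N\cdot\mathcal E\geq 0$, and compose with the projection onto $\mathcal M^{-1}\otimes I_Z$. If the composite vanishes then $N\subseteq\mathcal M$, so $N\cdot\mathcal E\leq\mathcal M\cdot\mathcal E<0$, a contradiction. If the composite is nonzero it gives a nonzero section of $N^{-1}\otimes\mathcal M^{-1}$ vanishing on the length-$(c+\mathcal M^2)$ scheme $Z$, forcing an effective divisor through $Z$ in a fixed numerical class; a general-position argument, in which the lower bound $c\geq 5$ makes $\operatorname{length}(Z)$ too large for such a divisor to exist, produces the contradiction. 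This excludes all destabilizing subsheaves and yields $\mu$-stability, hence stability of $V$.

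The main obstacle I anticipate is reconciling the two opposing demands in the stability step: $Z$ must be special enough (Cayley--Bacharach, contained in the smooth locus, giving a genuine orbibundle) for the extension to be locally free, yet generic enough that no unexpected effective divisor passes through it. Making these compatible, and verifying that the numerical thresholds $k>1$ and $c\geq 5$ genuinely suffice on the explicit surfaces $\tilde X_k$ coming from $X_{36}$ and $X_{50}$, is where the real work lies; by comparison the Chern-class bookkeeping and the extension across the du Val points are routine. This detailed verification on the explicit surfaces is precisely what is carried out in \cite{FGM}.
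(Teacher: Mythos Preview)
Your proposal is correct and matches the paper's approach: the paper does not prove this lemma in-line but cites \cite{FGM}, Lemma~4.1, and the introduction explicitly says the construction is ``based on the Serre construction of vector bundles adapted to the singular case at hand,'' which is exactly the strategy you outline. In fact you give considerably more detail than the paper itself, which defers the entire argument to \cite{FGM}; your final sentence lands in the same place.
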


\section{Main results}

The main existence theorem of this note is the following.
\begin{te}\label{main}
Let $X$ be a K3 orbisurface satisfying the conditions in Lemma \ref{lm4.2} and Lemma \ref{lm4.1}. Let  $\beta_{1},\beta_{2},\beta_{3}$ be closed anti\mbox{-}self\mbox{-}dual $2$-forms such that $\Big[\tfrac{1}{2\pi}\beta_{j}\Big]\in H^{2}_{orb}(X,\mathbb{Z})\quad (j=1,2,3).$ which exist by Lemma \ref{lm4.1} and let $\pi:M\to X$ be the associated $T^{3}$–bundle. 
If $V$ is a stable vector bundle of rank $2$ on $X$
with $c_{1}(V)=0$ and $c_2(V)=c\geq 5$,
then there exists a $t>0$ and a smooth function $u$ on $X$ and a product hyperholomorphic connection $\theta_{X}$ on $T X^{1,0}\times V$ such that $(\phi_{u,t},\pi^{*}\theta_{X})$ solves the $G_{2}$-Strominger system with $$\phi=\phi_{u,t}
= t^{3}\,\sigma_{1}\wedge\sigma_{2}\wedge\sigma_{3}
- t\,e^{u}\big(\sigma_{1}\wedge\omega_{1}
              + \sigma_{2}\wedge\omega_{2}
              + \sigma_{3}\wedge\omega_{3}\big)$$ and number $c$ as in Lemma \ref{lm4.2}.
\end{te}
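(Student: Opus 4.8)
The plan is to check the five equations of \eqref{StromingersystemG2} in three groups, keeping the notation of Section \ref{section3} (so the statement's $\phi_{u,t}$ and $\sigma_i$ are the $\varphi_{u,t}$ and $\theta_i$ there, and $u$ is pulled back from $X$ so that the metric is the $T^3$-invariant one $g_\varphi=t^2\sum\theta_i^2+e^u\pi^*g_X$). For \emph{every} $T^3$-invariant $u$ and every $t>0$, Propositions 4.1 and 4.3 of \cite{CGT}, recorded in Section \ref{section3}, already give $d\varphi\wedge\varphi=0$ and $d\star\varphi=-4\,df\wedge\star\varphi$ with $f=-\tfrac14u$; so these cost nothing, and the problem reduces to the instanton equations and the anomaly identity.

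For the instanton conditions I would take the product connection $\theta_X=\theta_{TX^{1,0}}\oplus\theta_V$. Since $V$ is stable with $c_1(V)=0$ over the hyper-K\"ahler (hence Ricci-flat K\"ahler) orbisurface $X$, the Hitchin--Kobayashi correspondence---valid here by the orbifold remark following Proposition 4.6 of \cite{CGT}---endows $V$ with a Hermite--Einstein connection $\theta_V$ whose curvature is trace-free of type $(1,1)$, i.e.\ anti-self-dual; as anti-self-duality in real dimension four depends only on the metric and orientation, $F_{\theta_V}$ is anti-self-dual for all three $\omega_i$ and hence hyperholomorphic. The tangent connection $\theta_{TX^{1,0}}$ is hyperholomorphic for the same reason, the curvature of a Ricci-flat K\"ahler surface being anti-self-dual. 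Pulling back to $M$, (the orbifold version of) Proposition 4.6 of \cite{CGT} turns hyperholomorphicity into the $G_2$-instanton identities $F_A\wedge\star\varphi=0$ and $R_\nabla\wedge\star\varphi=0$ for $A=\pi^*\theta_X$ and the associated $\nabla$.

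The core is the last equation. Beginning with the formula for $dH_\varphi$ in Lemma 4.5 of \cite{CGT} and writing $\phi:=e^{u}$, I would note that its two $u$-dependent terms combine into one exact form,
\[
-\tfrac12 e^{u}du\wedge\iota_{\mathrm{grad}(u)}\omega_1^2-\tfrac12 e^{u}\,d\!\left(\iota_{\mathrm{grad}(u)}\omega_1^2\right)=-\tfrac12\,d\!\left(\iota_{\mathrm{grad}(\phi)}\omega_1^2\right),
\]
and that on $X$ one has $\iota_{\mathrm{grad}(\phi)}\omega_1^2=2\star_X d\phi$, so this term equals $(\Delta_X\phi)\,\mathrm{Vol}_X$ for the \emph{fixed} hyper-K\"ahler Laplacian. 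The anomaly equation therefore collapses to the linear Poisson equation
\[
(\Delta_X\phi)\,\mathrm{Vol}_X=\frac{\alpha'}{4}\bigl(\mathrm{tr}(F_A\wedge F_A)-\mathrm{tr}(R_\nabla\wedge R_\nabla)\bigr)-t^2\bigl(\beta_1^2+\beta_2^2+\beta_3^2\bigr)
\]
on the compact surface $X$. Such an equation is solvable exactly when its right-hand side integrates to zero, and then $\phi$ is unique up to an additive constant, which I fix large enough that $\phi>0$; setting $u=\log\phi$ gives the required smooth function.

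It remains to choose $t$ so that the integrability condition
\[
t^2\int_X\bigl(\beta_1^2+\beta_2^2+\beta_3^2\bigr)=\frac{\alpha'}{4}\int_X\bigl(\mathrm{tr}(F_A\wedge F_A)-\mathrm{tr}(R_\nabla\wedge R_\nabla)\bigr)
\]
holds, and this is where I expect the genuine work to lie. The product structure of $\theta_X$ makes the tangent contributions to $\mathrm{tr}(F_A\wedge F_A)$ and $\mathrm{tr}(R_\nabla\wedge R_\nabla)$ cancel, so Chern--Weil reduces the right-hand integral to a fixed nonzero multiple of $c_2(V)=c$. On the left, since $\mathcal D_i\cdot\mathcal E=0$ (Lemma \ref{lm4.1}), the Hodge index theorem gives $\int_X\beta_i^2=(2\pi)^2\,\mathcal D_i^2<0$ for each $i$. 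The main obstacle is the sign bookkeeping: one must check that the Chern--Weil constant multiplying $c$ carries the same sign as $\sum_i\mathcal D_i^2$, so that
\[
t^2=\frac{\tfrac{\alpha'}{4}\int_X\bigl(\mathrm{tr}(F_A\wedge F_A)-\mathrm{tr}(R_\nabla\wedge R_\nabla)\bigr)}{\int_X(\beta_1^2+\beta_2^2+\beta_3^2)}
\]
is strictly positive; here the hypothesis $c\ge 5$ (which guarantees the very existence of $V$ by Lemma \ref{lm4.2}) and the smoothness and simple-connectivity of the Seifert tower from Lemma \ref{lm4.1} (which make $M$ a genuine smooth $7$-manifold carrying the global connection form $\theta$) are exactly what allow the argument to close. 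With $t$ so determined and $u=\log\phi$ from the Poisson equation, the pair $(\varphi_{u,t},\pi^*\theta_X)$ satisfies all five equations of \eqref{StromingersystemG2}.
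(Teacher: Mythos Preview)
Your overall architecture is the paper's: the first two equations of \eqref{StromingersystemG2} are free from \cite{CGT}, the instanton equations follow from hyperholomorphicity via (the orbifold version of) Proposition~4.6 of \cite{CGT}, the anomaly equation becomes a linear Poisson equation $\Delta_X(e^u)=\text{RHS}$ on $X$, and the integrability condition fixes $t$. That is exactly what the paper does.

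Where you diverge is the Chern--Weil bookkeeping. You set $A=\pi^*\theta_X$ to be the \emph{entire} connection on $TX^{1,0}\oplus V$ and then appeal to an unspecified ``associated $\nabla$'' whose tangent contribution you assert cancels the $TX^{1,0}$-part of $A$, leaving a multiple of $c_2(V)=c$ alone. The paper reads the product the other way: in $\theta_X=\nabla\times A$ the factor $\nabla$ lives on $TX^{1,0}$ and $A$ on $V$, and one works with the indefinite pairing $\langle\,\cdot\,,\,\cdot\,\rangle=\tfrac{\alpha}{4}(\operatorname{tr}_{\mathfrak g_{\ell_1}}-\operatorname{tr}_{\mathfrak g_{\ell_2}})$ on the sum. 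The Chern--Weil output is then
\[
\frac{1}{2\pi^2}\int_X\langle F_{\theta_X}\wedge F_{\theta_X}\rangle=\alpha\bigl(c_2(V)-c_2(TX^{1,0})\bigr)=\alpha\bigl(c-e_{\mathrm{orb}}(X)\bigr),
\]
not a multiple of $c$ by itself; the orbifold Euler number genuinely enters the formula for $t$. Your cancellation would only be legitimate if you had specified a connection $\nabla$ on $TM$ whose $\operatorname{tr}(R_\nabla\wedge R_\nabla)$ equals the pulled-back $\operatorname{tr}(F_{\theta_{TX^{1,0}}}\wedge F_{\theta_{TX^{1,0}}})$---and you leave that implicit.

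On the sign issue you flag: the paper does not close it via the inequality $c\ge 5$. It simply takes $\alpha\in\mathbb R^*$ as a free parameter in the pairing and chooses its sign so that $t^2=\tfrac{\alpha}{2}(e_{\mathrm{orb}}(X)-c)\big/\sum_jQ(c_1(\mathcal D_j))$ is positive. The hypothesis $c\ge5$ is there only to guarantee the existence of $V$ via Lemma~\ref{lm4.2}, not to fix a sign.
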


\vspace{.1in}
\begin{proof}

Notre that from  Lemma \ref{lm4.2} we know that stable bundles $V$ with $c_1(V)=0$ and $c_2(V)=c$ for any $c\geq 5$ exist over $X$, since $X$ is $X_{30}$ or $X_{50}$ with appropriate partial resolution of their singularities.   
Following the arguments in \cite{CGT}, the bundle $TX^{1,0}\times V$ is polystable and $c_1(TX^{1,0}\oplus V)=c_1(TX^{1,0})+c_1(V)=0$, so $TX^{1,0}\oplus V$ admits an anti-self-dual  connection $\theta_X$ which is Hermitian-Yang-Mills with respect to each complex structure over $X$ \cite{Verbitsky}. In particular we are interested in product of Chern connections $\theta_X=\nabla\times A$ and curvatures $F_{\theta_X}=(F_\nabla,F_A)$.
Consider the bundle $E:=\pi^*(TX^{1,0}\oplus V)$ over the $T^3$ bundle $M$, and the connection $\theta:=\pi^*(\theta_X)$ on $E$. The curvature $F_\theta=\pi^*F_{\theta_X}$ is a $\phi_{u,t}$-instanton for any $u\in C^\infty_{orb}(X)$ and $t>0$. Thus, the Bianchi identity becomes an equation over $X$ 
$$\Delta h=t^2(|\beta_1|^2+|\beta_2|^2+|\beta_3|^2)+*_{4}\langle F_{\theta_X}\wedge F_{\theta_X}\rangle,$$
where we set $h=e^u.$ By general elliptic theory the equation above has a solution if and only if the integral of the right hand side is equal to zero. 

We equip $T X^{1,0}$ and $V$ with Hermitian metrics, so that their structure
groups reduce to compact Lie groups with Lie algebras $\mathfrak g_{\ell_1}$
and $\mathfrak g_{\ell_2}$, respectively. We fix invariant Hermitian inner
products $\operatorname{tr}_{\mathfrak g_{\ell_1}}$ and
$\operatorname{tr}_{\mathfrak g_{\ell_2}}$ on these Lie algebras extending the
Killing form. On $T X^{1,0}\oplus V$ we use the pairing
\[
\langle\ ,\ \rangle
 = \frac{\alpha}{4}\bigl(\operatorname{tr}_{\mathfrak g_{\ell_1}}
   - \operatorname{tr}_{\mathfrak g_{\ell_2}}\bigr),
\]
for some $\alpha\in\mathbb R^*$. The induced quadratic curvature expression is
\[
\langle F_{\theta_X}\wedge F_{\theta_X}\rangle
 = \frac{\alpha}{4}\bigl(\operatorname{tr} F_A\wedge F_A
 - \operatorname{tr} F_\nabla\wedge F_\nabla\bigr).
\]

On cohomology, since $c_1(TX^{1,0}) = c_1(V)=0$, Chern–Weil theory gives
\[
\big[\langle F_{\theta_X}\wedge F_{\theta_X}\rangle\big]
 = 2\pi^2\alpha\bigl(c_2(V) - c_2(TX^{1,0})\bigr)
 \in H^4_{\mathrm{orb}}(X,\mathbb R).
\]
In particular, identifying $c_2(TX^{1,0})$ with the orbifold second Chern
class $c_2^{\mathrm{orb}}(X)$, we obtain
\[
\frac{1}{2\pi^2}\int_X \langle F_{\theta_X}\wedge F_{\theta_X}\rangle
 = \alpha(c_2(V) - c_2(TX^{1,0}))=\alpha(c_2(V) - e_{orb}(X)).
\]
Note that this number will not be an integer multiple of $\alpha$, because $$e_{orb}(X)=e(X)-\sum_{i}\frac{n_i}{n_i+1},$$ where the $n_i$'s are the indices of the $A_n$ singularities of $X$. 

We denote the intersection form on the second cohomology 
$$Q:H^2(X,\mathbb{Z})\times H^2(X,\mathbb{Z})\rightarrow\mathbb{Z};$$ 
we do not have to go to the orbifold cohomology since we have that $[\frac{1}{2\pi}\beta_j]$ are first Chern classes of the divisors $\mathcal{D}_j$ from  Lemma \ref{lm4.1} i.e.  $c_1(\mathcal{D}_j)=[\frac{1}{2\pi}\beta_j].$ So, we have 
$$Q\left(\left[\frac{1}{2\pi}\beta_j\right]\right):=Q(c_1(\mathcal{D}_j),c_1(\mathcal{D}_j))=\frac{-1}{4\pi^2}\int_X|\beta_j|^2\text{dvol}_X.$$
So, as we said before, to solve the equation $$\Delta h=t^2(|\beta_1|^2+|\beta_2|^2+|\beta_3|^2)+*_{4}\langle F_{\theta_X}\wedge F_{\theta_X}\rangle$$ we need the following numbers to be equal

$$\frac{\alpha}{2}(e_{orb}(X)-c_2(V))=t^2\sum_{j}Q(c_1(\mathcal{D}_j)).$$

We obtain  a solution for every

$$t=\sqrt{\frac{\frac{\alpha}{2}(e_{orb}(X)-c_2(V))}{\sum_{j}Q(c_1(\mathcal{D}_j))}}=\sqrt{\frac{\frac{\alpha}{2}(e_{orb}(X)-c)}{\sum_{j}Q(c_1(\mathcal{D}_j))}},$$
for any $c\geq 5$ by Lemma  \ref{lm4.2}.

\end{proof}

As a consequence we obtain 
\begin{co}
 The total space of a $T^3$ principal bundle $M$ over a singular K3 as in Theorem \ref{main} admits a solution of the $G_2$ Hull-Strominger system \eqref{StromingersystemG2} with $G = SU(2)$.
\end{co}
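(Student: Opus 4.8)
The plan is to assemble the pieces already developed in the excerpt and verify that all the equations of the system \eqref{StromingersystemG2} hold simultaneously. First I would observe that Lemma \ref{lm4.1} and Lemma \ref{lm4.2} supply exactly the geometric input the ansatz of Section \ref{section3} requires: three divisors $\mathcal{D}_0,\mathcal{D}_1,\mathcal{D}_2$ on the (partially resolved) K3 orbisurface that are primitive with respect to a K\"ahler class $\mathcal{E}$, generating a smooth simply connected principal $T^3$-bundle $\pi:M\to X$, together with a stable rank-$2$ bundle $V$ with $c_1(V)=0$ and $c_2(V)=c\geq 5$. The primitivity $\mathcal{D}_i\cdot\mathcal{E}=0$ is precisely what forces the curvature forms $\beta_j$ to be anti-self-dual harmonic representatives, which is the condition making the formulas for $\varphi$, $\star\varphi$, and $H_\varphi$ computed in \cite{CGT} directly applicable on the orbifold. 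The connection $\theta_X$ is taken to be the product of the Hermitian--Yang--Mills connections on $TX^{1,0}$ and $V$; since $c_1(TX^{1,0})+c_1(V)=0$ and each factor is (poly)stable, hyperholomorphicity follows from Verbitsky's theorem \cite{Verbitsky}, so $\theta=\pi^*\theta_X$ is a $\varphi_{u,t}$-instanton. This handles the first four equations of the system automatically for any choice of $u$ and $t$.

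The heart of the argument is the anomaly cancellation equation $dH_\varphi=\tfrac{\alpha'}{4}(\operatorname{tr}(F_A\wedge F_A)-\operatorname{tr}(R_\nabla\wedge R_\nabla))$. Using Lemma 4.5 of \cite{CGT} and setting $h=e^u$, this reduces to a scalar Poisson-type equation on the compact base $X$ of the schematic form
\[
\Delta h = t^2\bigl(|\beta_1|^2+|\beta_2|^2+|\beta_3|^2\bigr) + *_4\langle F_{\theta_X}\wedge F_{\theta_X}\rangle.
\]
By standard elliptic theory on the closed orbisurface $X$, such an equation is solvable for $h$ (hence for $u$) if and only if the integral of the right-hand side against the volume form vanishes, since the Laplacian has constant functions as its kernel and cokernel. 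Thus the entire problem collapses to arranging this single integral identity by an appropriate choice of the constant $t>0$.

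I would then compute both contributions to the integral explicitly. For the instanton term, Chern--Weil theory gives $\tfrac{1}{2\pi^2}\int_X\langle F_{\theta_X}\wedge F_{\theta_X}\rangle=\alpha(c_2(V)-c_2(TX^{1,0}))$, and the key subtlety is that $c_2(TX^{1,0})$ must be interpreted as the orbifold Euler characteristic $e_{\mathrm{orb}}(X)=e(X)-\sum_i \tfrac{n_i}{n_i+1}$, which is rational rather than integral because of the $A_{n_i}$ singularities. For the $T^3$-curvature term, the identity $[\tfrac{1}{2\pi}\beta_j]=c_1(\mathcal{D}_j)$ lets me rewrite $\int_X|\beta_j|^2\,\mathrm{dvol}_X$ in terms of the negative self-intersections $Q(c_1(\mathcal{D}_j))$ via the ordinary intersection form. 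Matching the two sides yields the balance condition $\tfrac{\alpha}{2}(e_{\mathrm{orb}}(X)-c)=t^2\sum_j Q(c_1(\mathcal{D}_j))$, and solving for $t$ gives the stated formula. The main obstacle—and the step requiring genuine care—is ensuring this value of $t$ is real and positive: one must check that the numerator and denominator have the same sign. Since the $\mathcal{D}_j$ are non-trivial divisors supported partly on negative exceptional curves, $\sum_j Q(c_1(\mathcal{D}_j))<0$; this forces the requirement $c_2(V)=c>e_{\mathrm{orb}}(X)$ (with $\alpha>0$), which the freedom $c\geq 5$ in Lemma \ref{lm4.2} is precisely designed to accommodate. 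Verifying this sign compatibility, together with confirming that the orbifold elliptic theory applies unchanged in the presence of isolated DuVal singularities, is where the substantive work lies; once $t$ is fixed and $u$ is obtained, $(\varphi_{u,t},\pi^*\theta_X)$ solves the full system.
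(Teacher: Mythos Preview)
Your proposal reproduces, almost step by step, the proof of Theorem~\ref{main} itself---the reduction of the anomaly equation to a Poisson equation on $X$, the Chern--Weil computation, and the balancing of $t$. But Theorem~\ref{main} is already proved at this point in the paper, so none of this is needed. The Corollary is not asking you to re-establish existence of a solution; it is asking you to identify the structure group $G$ of the principal bundle $P$ carrying the instanton connection $A$ as $SU(2)$.

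The paper's proof is a single sentence: the bundle $V$ in Theorem~\ref{main} has rank $2$ and $c_1(V)=0$, so its structure group reduces to $SU(2)$; hence the $A$-part of the solution $(\varphi_{u,t},\pi^*\theta_X)$ lives on an $SU(2)$-bundle. Your write-up mentions that $V$ is a rank-$2$ bundle with $c_1(V)=0$, but never draws the conclusion that this is exactly what makes $G=SU(2)$, which is the entire content of the Corollary. So the gap is one of focus rather than technique: you have supplied a (correct) sketch of the wrong statement and omitted the one-line observation the Corollary actually requires.
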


For the proof we just notice that the structure group of the bundle $V$ in Theorem \ref{main} is $SU(2)$.

\section{Examples}

We consider here some examples based on Iano-Fletcher’s list \cite{IF}. They are selected as in \cite{FGM}, since these families provide a large variety of spaces with different topology that carry solutions to the Hull–Strominger system. Throughout, when we write
$X_{30}\subset \mathbb{P}(5,6,8,11),
X_{36}\subset \mathbb{P}(7,8,9,12),
X_{50}\subset \mathbb{P}(7,8,10,25),$
we mean a generic (quasi-smooth, well-formed) hypersurface in the corresponding family: while special members can occur with $\rho(X)=\mathrm{rk}\,\mathrm{Pic}(X)>1$, we work away from these Noether–Lefschetz-type loci and assume the hypersurface is chosen generically so that no additional divisor classes appear beyond the expected ones. This genericity assumption is consistent with Belcastro’s thesis, where it is shown that for a generic surface in one of Reid’s 95 weighted K3 hypersurface families one has $\rho(S)=1$ \cite{Be}. Here, the subscript $d$ in $X_d$ denotes the degree of the hypersurface, and $\mathbb{P}(a_0,a_1,a_2,a_3)$ denotes the weighted projective space with weights $a_0,a_1,a_2,a_3$ on the homogeneous coordinates.

The singularities of $X_{36}$ are described in \cite{IF}.  Here we describe the singularities of  $X_{30}$ and $X_{50}$. In general, a  well-formed $\mathbb{P}(a_0,a_1,a_2,a_3)$ has singularities on each of its affine pieces $x_i\neq 0$, which are isomorphic to $\mathbb{C}^2/\mathbb{Z}_{n+1}$ where $n+1$ is the weight of $x_i$. They have singular edges when the $gcd(a_i,a_j)=d>1$, and the singular edges are isomorphic to $\mathbb{P}(a_i,a_j)\cong \mathbb{P}(\frac{a_i}{d},\frac{a_j}{d})$.

The hypersurface $X_{30}\subset \mathbb{P}(5,6,8,11)$ is given by a general polynomial
$$f=w^6 + x^5 + yz^2 + xy^3 + w^2x^2y+wxyz$$
with $(x,y,z,w)$ having weights $(5,6,8,11)$, where for simplicity we consider at the moment all the coefficients in the monomials to be ones. This surface is well-formed and quasismooth (see \cite{IF}), so its only possible singularities are inherited from the ones in $\mathbb{P}(5,6,8,11)$. Since $f$ has the monomials $w^6$ and $x^5$, when we restrict to $w\neq 0$ and $x\neq 0$, $f$ restricts to a polynomial of the form $f(1,x,y,z)=1+...$ and $f(w,1,y,z)=1+...$ thus the singular points $(1:0:0:0),$ and $(0:1:0:0)$ are not in the surface. On the contrary, the polynomial $f$ does not have monomials only on $y$ and $z$, thus the singular points $(0:0:1:0),$ and $(0:0:0:1)$ are in the surface, which are of type $A_7$, and $A_{10}$ respectively. Along the singular edge $\mathbb{P}(6,8)\cong \mathbb{P}(3,4)$ we have  $f=x^4+y^3 $, and with Lemma 9.4 in \cite{IF} we calculate that $X_{30}$ intersects at $\lfloor \frac{12}{3\cdot4}\rfloor=1$ points. Following 5.15 in \cite {IF} this is an $A_1$ singularity since $1=gcd(6,8)-1$   from $\mathbb{P}(6,8)$. In order to ensure that rk Pic$(X_{30}) = 1$ we need to consider generic coefficients in front of all monomials, but this does not change the type of the singularities.

Similarly for $X_{50}\subset \mathbb{P}(7,8,10,25)$ we can consider for simplicity the polynomial
$$f=z^2+y^5+w^6x+x^5y+w^2+x^2+y^2+wxyz.$$
By the same reasoning as before, we have the monomials $z^2$ and $y^5$, so $X_{50}$ does not contain the singular points $(0:0:1:0)$ and $(0:0:0:1)$, it does contain the points $(1:0:0:0)$ and $(0:1:0:0)$ with $A_6$ and $A_7$ singularities. Along the edge $\mathbb{P}(8,10)\cong \mathbb{P}(4,5)$, we have $f=y^4+x^5$, so $X_{50}$ intersects the singular edge at $\lfloor \frac{20}{4\cdot5}\rfloor=1$ point. Again from 5.15 in \cite{IF} we conclude that this singular point is of type $gcd(8,10)-1$, hence an $A_1$ singularity. Along the edge $\mathbb{P}(10,25)\cong \mathbb{P}(2,5)$, we have $f=z^2+y^5$ which intersects the singular edge at $\lfloor \frac{10}{2\cdot5}\rfloor=1$ point giving an $A_4$ singularity.

In summary, $X_{30}$ has $A_1,A_7,$ and $A_{10}$ singularities; $X_{36}$ has $A_6,A_7,A_3$, and $A_2$ singularities; $X_{50}$ has $A_6,A_7,A_1,$ and $A_4$ singularities. Also, considering generic coefficients all of them have the rank of the Picard group equal to one.

According to Theorem \ref{main} the corresponding blow-ups of enough singularities will lead to solutions of the Hull-Strorminger system. In the table at the end we list the consecutive blow-ups for $X_{36}$ and $X_{50}$ with their Euler characteristics.

  Finally, we mention the rational homology of such spaces. Suppose $\tilde{X}_k$ is one of the consecutive blow-ups of $X_{36}$, $X_{30}$, or $X_{50}$ as in Section 4. Then we have:
\begin{lm}
Consider the total space of the bundle $f_2:Y_3\rightarrow Y_2$ in Lemma 4.1 where $Y_2$ is a $T^2$ Seifert bundle over $\tilde{X}_k$. Then $Y_2$ is diffeomorphic to one of  $\sharp_{r}(S^2\times S^4)\sharp_{r+1} (S^3\times S^3)$ for $5\leq r \leq 22$. Moreover, every such $r$ appears for an appropriate sequence of blow-ups of $X$ and the Betti numbers of $Y_3$ are $b_1=0, b_2=r-1, b_3=r+1, b_4=r+1, b_5=r-1$.
\end{lm}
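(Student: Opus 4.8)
The plan is to identify $Y_2$ by invoking the diffeomorphism classification of closed simply connected spin $6$-manifolds, and then to obtain the Betti numbers of $Y_3$ from one further Gysin sequence. The inputs from Lemma~\ref{lm4.1} are that the Seifert tower $Y_1\to\tilde X_k$, $Y_2\to Y_1$, $Y_3\to Y_2$ has \emph{smooth and simply connected} total spaces, so $Y_2$ and $Y_3$ are smooth closed manifolds with $H_1=0$; in particular $H^2(\,\cdot\,;\mathbb Z)$ is torsion free. Since the Seifert data of the second and third bundles are pulled back from the base classes $c_1(\mathcal D_1),c_1(\mathcal D_2)$, rationally $Y_2$ (resp.\ $Y_3$) is the total space of a principal $T^2$- (resp.\ $T^3$-) bundle over the $4$-orbifold $\tilde X_k$. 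Using that quotient singularities are rationally invisible, I would compute $H^*(Y_2;\mathbb Q)$ and $H^*(Y_3;\mathbb Q)$ from the Koszul (iterated Gysin) complex $\big(H^*_{orb}(\tilde X_k;\mathbb Q)\otimes\Lambda(x_1,x_2,x_3),\,d\big)$ with the $dx_i$ equal to the three pulled-back Euler classes $c_1(\mathcal D_0),c_1(\mathcal D_1),c_1(\mathcal D_2)=[\tfrac1{2\pi}\beta_1],[\tfrac1{2\pi}\beta_2],[\tfrac1{2\pi}\beta_3]$, where $H^*_{orb}(\tilde X_k;\mathbb Q)$ has Betti numbers $(1,0,b_2,0,1)$, $b_2=b_2(\tilde X_k)=k+4$.

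Because $\mathcal D_0,\mathcal D_1$ are linearly independent in $H^2(\tilde X_k;\mathbb Q)$, the two transgressions reduce $b_2$ by two and make the top class $\mu\in H^4(\tilde X_k)$ exact; the resulting bookkeeping gives $b_2(Y_2)=b_2(\tilde X_k)-2=:r$ and $b_3(Y_2)=2r+2$, the other Betti numbers being forced by Poincar\'e duality, so that the Betti numbers of $Y_2$ agree with those of $\sharp_r(S^2\times S^4)\sharp_{r+1}(S^3\times S^3)$. The same complex shows that the cup product $H^2(Y_2)\otimes H^2(Y_2)\to H^4(Y_2)$ vanishes: a product $v\cup w$ is represented in the complex by $Q(v,w)\,\mu$, with $Q$ the intersection form, and $\mu$ is a coboundary; hence the cubic form $x\mapsto\langle x^3,[Y_2]\rangle$ is identically zero. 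For the Pontryagin datum I would use the splitting $TY_2\cong\pi^*T\tilde X_k\oplus\underline{\mathbb R}^2$, the vertical torus directions being trivialized by the connection, so that $p_1(Y_2)=\pi^*p_1(\tilde X_k)$; since $\pi^*$ is onto in degree two and $p_1(\tilde X_k)\cup y$ lies in $H^6$ of a $4$-orbifold, the linear form $x\mapsto\langle p_1(Y_2)\cup x,[Y_2]\rangle$ also vanishes.

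It remains to check that $Y_2$ is spin and that $H_*(Y_2;\mathbb Z)$ is torsion free. Spinness is clean: the partial resolution $\tilde X_k\to X$ is crepant (all exceptional curves over the Du Val points have discrepancy zero, as $\hat X\to\tilde X_k$ and $\hat X\to X$ are both crepant for the minimal resolution $\hat X$), so $c_1(\tilde X_k)=0$ in $H^2(\tilde X_k;\mathbb Q)$; thus $\pi^*c_1(\tilde X_k)$ is a torsion class in the torsion-free group $H^2(Y_2;\mathbb Z)$, hence zero, and $w_2(Y_2)=\pi^*w_2(\tilde X_k)=0$. Torsion freeness in the remaining degrees I would derive from the \emph{integral} refinement of the Gysin sequences, which preserves torsion freeness precisely because each $\mathcal D_i$ is a primitive (indivisible) class; this is exactly what the relations $\mathcal D_i\cdot\alpha=1$ established in the proof of Lemma~\ref{lm4.1} provide. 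With $Y_2$ recognized as a torsion-free, simply connected, spin $6$-manifold with vanishing cubic and $p_1$ forms, the classification of such manifolds (Wall and Jupp, see also Zhubr; here $\Theta_6=0$ removes the homotopy-sphere ambiguity) forces $Y_2\cong\sharp_r(S^2\times S^4)\sharp_{r+1}(S^3\times S^3)$ with $r=b_2(Y_2)$ and $r+1=\tfrac12 b_3(Y_2)$.

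The range of $r$ then follows from $r=b_2(\tilde X_k)-2=k+2$ together with the admissible values of $k$ permitted by Lemma~\ref{lm4.1} (enough $A_n$ with $n\ge3$ must remain for the construction to run), and the Betti numbers of $Y_3$ are read off by feeding the now-known $H^*(Y_2)$ into the Gysin sequence of $f_2\colon Y_3\to Y_2$ with Euler class $\epsilon=c_1(\mathcal D_2)$; the computation simplifies because $\cup\,\epsilon\colon H^2(Y_2)\to H^4(Y_2)$ is zero, so the sequence splits in each degree and returns $b_2(Y_3)=r-1$ directly. I expect the \textbf{main obstacle} to be neither the rational cohomology bookkeeping nor the vanishing of the cup and Pontryagin forms---both are formal consequences of the Koszul model---but the \emph{integral} statement that $H_*(Y_2;\mathbb Z)$ is torsion free in the Seifert/orbifold setting, since this depends on the fine integral and mod-$2$ structure of the chosen divisors $\mathcal D_i$ rather than on their rational classes, and is what ultimately licenses the use of the torsion-free classification theorem.
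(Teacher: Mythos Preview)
Your plan is exactly what the paper's one-line proof (a reference to the Gysin-sequence argument of \cite{FGM}) has in mind: iterated Gysin/Koszul bookkeeping for the tower of $S^1$-bundles, vanishing of the cubic and $p_1$ forms on $Y_2$, spinness via crepancy of the partial resolution, integral torsion-freeness from the primitivity of the $\mathcal D_i$, and then the Wall--Jupp classification of simply connected spin $6$-manifolds. One point to flag: when you actually run the final Gysin step, the vanishing of $\cup\,\epsilon\colon H^2(Y_2)\to H^4(Y_2)$ that you correctly established forces $b_3(Y_3)=b_3(Y_2)+b_2(Y_2)=(2r+2)+r=3r+2$ rather than the printed value $r+1$, so the discrepancy lies in the lemma's statement, not in your method.
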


The proof follows from the exact sequence of principal $S^1$-bundles as in \cite{FGM}, noting that both $Y_2$ and $Y_3$ are smooth. As  a Corollary,  we obtain:

\begin{co} For $5 \le r \le 22$ there is a  smooth simply-connected compact $7$-manifold $M$  with $$b_2(M)=b_5(M)= r-1, b_3(M)=b_4(M)=r+1$$ carrying a solution  to  the $G_2$ Hull-Strominger system.
\end{co}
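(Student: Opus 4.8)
The plan is to read the Corollary as an assembly of the preceding Lemma with Theorem \ref{main}, applied to the manifold $M:=Y_3$ produced by the tower of Seifert $S^1$-bundles of Lemma \ref{lm4.1}. Concretely, I would begin from a blow-up $\tilde X_k$ of one of $X_{30},X_{36},X_{50}$ satisfying the hypotheses of Lemmas \ref{lm4.1} and \ref{lm4.2} (this $\tilde X_k$ plays the role of the surface ``$X$'' in Theorem \ref{main}), take the divisors $\mathcal D_0,\mathcal D_1,\mathcal D_2$ and the ample class $\mathcal E$ supplied by Lemma \ref{lm4.1}, and let $M=Y_3$ be the total space of the iterated bundle $Y_1\to\tilde X_k$, $Y_2\to Y_1$, $Y_3\to Y_2$. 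Since the three classes are primitive with respect to $\mathcal E$, this composite is a principal $T^3$-(orbi)bundle over $\tilde X_k$, which is exactly the $\pi:M\to X$ of Theorem \ref{main}.

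First I would record the topology of $M$. Smoothness and simple connectivity are the two conclusions of Lemma \ref{lm4.1}: each stage $Y_i$ is smooth and simply connected, so $M=Y_3$ is a smooth, simply-connected $7$-manifold, and compactness is immediate since $\tilde X_k$ is compact with torus fibres. The Betti numbers are then read off directly from the preceding Lemma, which gives $b_2(M)=b_5(M)=r-1$ and $b_3(M)=b_4(M)=r+1$, together with $b_1(M)=0$ (consistent with $\pi_1(M)=0$); the same Lemma asserts that every integer $r$ with $5\le r\le 22$ is realized for a suitable sequence of blow-ups, so each listed profile of Betti numbers is attained.

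Second I would supply the geometric content by invoking Theorem \ref{main} for the same $\tilde X_k$ and the same triple of divisors. By Lemma \ref{lm4.1} there are closed anti-self-dual $2$-forms $\beta_1,\beta_2,\beta_3$ with $\big[\tfrac1{2\pi}\beta_j\big]=c_1(\mathcal D_j)\in H^2_{orb}(\tilde X_k,\mathbb Z)$ defining $M$, and by Lemma \ref{lm4.2} a stable rank-$2$ bundle $V$ with $c_1(V)=0$ and $c_2(V)=c\ge 5$. Theorem \ref{main} then produces a constant $t>0$, a smooth function $u$, and a product hyperholomorphic connection $\theta_X$ on $T\tilde X_k^{1,0}\oplus V$ such that $(\varphi_{u,t},\pi^*\theta_X)$ solves the $G_2$ Hull-Strominger system \eqref{StromingersystemG2} on $M$, which completes the argument.

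The only point requiring care is consistency: one must use a single orbisurface $\tilde X_k$ and a single triple $\mathcal D_0,\mathcal D_1,\mathcal D_2$ simultaneously for the topological count and for the existence of the solution, so that the manifold carrying the prescribed Betti numbers is literally the one on which Theorem \ref{main} yields a solution. The genuinely delicate part — matching the entire range $5\le r\le 22$ against the admissible blow-up sequences and computing the Betti numbers through the Gysin sequences of the successive $S^1$-bundles — is already carried by the preceding Lemma, so granting that, the Corollary follows by combining the pieces.
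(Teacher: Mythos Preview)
Your proposal is correct and matches the paper's approach: the Corollary is stated immediately after the Lemma on Betti numbers and is obtained, as you describe, by combining that Lemma (for the topology and the full range $5\le r\le 22$) with Theorem~\ref{main} applied to the same $\tilde X_k$ and divisors $\mathcal D_0,\mathcal D_1,\mathcal D_2$. The paper gives no further argument beyond this assembly, so your write-up is in fact more detailed than the original.
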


\begin{table}[h!]
\centering
\begin{tabular}{|c|c|c|c|}
\hline
\textbf{$e(X)$} & \textbf{Surface} & \textbf{Fully Resolved Singularities} & \textbf{Remaining Singularities} \\
\hline
9  & $X_{36}$ & $A_3$ & $A_2,A_6,A_7$\\
10 & $X_{50}$ & $A_4$ & $A_1,A_6,A_7$\\
11 & $X_{50}$ & $A_1,A_4$ & $A_6,A_7$\\
12 & $X_{50}$ & $A_6$ & $A_1,A_4,A_7$\\
13 & $X_{50}$ & $A_7$ & $A_1,A_4,A_6$\\
14 & $X_{50}$ & $A_1,A_7$ & $A_4,A_6$\\
15 & $X_{36}$ & $A_3,A_6$ & $A_2,A_7$\\
16 & $X_{50}$ & $A_4,A_6$ & $A_1,A_7$\\
17 & $X_{50}$ & $A_1,A_4,A_6$ & $A_7$\\
18 & $X_{50}$ & $A_1,A_4,A_7$ & $A_6$\\
19 & $X_{50}$ & $A_6,A_7$ & $A_1,A_4$\\
20 & $X_{50}$ & $A_1,A_6,A_7$ & $A_4$\\
21 & $X_{36}$ & $A_2,A_6,A_7$ & $A_3$\\
22 & $X_{36}$ & $A_3,A_6,A_7$ & $A_2$\\
23 & $X_{50}$ & $A_4,A_6,A_7$ & $A_1$\\
\hline
\end{tabular}
\caption{Table of $e(X)$ values with corresponding surfaces and fully resolved singularities.}
\end{table}

\newpage

\smallskip
\textbf{Acknowledgements}.
Anna Fino is partially supported by Project PRIN 2022 \lq \lq Geometry and Holomorphic Dynamics”, by GNSAGA (Indam) and by a grant from the Simons Foundation (\#944448).  Gueo Grantcharov is partially supported by a grant from the Simons Foundation (\#853269).
\smallskip

\end{document}